\newcommand{\al}{\alpha}
\newcommand{\be}{\beta}
\newcommand{\la}{\lambda}
\newcommand{\Vd}{\mathcal{V}_{d}}
\newcommand{\ga}{\gamma}
\newcommand{\va}{\varepsilon}
\theoremstyle{plain}
\newtheorem{thm}{Theorem}
\newtheorem{prop}[thm]{Proposition}
\theoremstyle{definition}
\newtheorem{defn}[thm]{Definition}
\newtheorem*{example*}{Example}
\newtheorem*{rem*}{Remark}
\newtheorem{rem}[thm]{Remark}
\newcommand{\divv}{\text{div}\,}
\newcommand{\R}{\mathbb{R}}
\DeclareSymbolFont{bbsymbol}{U}{bbold}{m}{n}
\DeclareMathSymbol{\ind}{\mathbin}{bbsymbol}{'061}
\title[Improved Hardy  and Rellich inequalities]{Improved Hardy and Rellich inequalities for antisymmetric and odd functions}
\author[M{.} Kijaczko]{Micha\l{} Kijaczko}
\keywords{Hardy inequality, Rellich inequality, antisymmetric function, odd function, weight}
\subjclass[2020]{26D10, 35A23, 46E35}
\address[ M.K.]{Faculty of Pure and Applied Mathematics\\ Wroc{\l}aw University 
	of Science and Technology\\
	Wybrze\.ze Wyspia\'nskiego 27,
	50-370 Wroc{\l}aw, Poland
}
\email{michal.kijaczko@pwr.edu.pl}
\begin{document}

\begin{abstract}
We investigate the Hardy and Rellich inequalities for classes of antisymmetric and odd functions and general exponent $p$. The obtained constants are better than the classical ones.
\end{abstract}

	\maketitle
	
\section{Introduction}
\subsection{Hardy inequalities}
The classical Hardy inequality on $L^p(\R^d)$ states that 
\begin{equation}\label{classicalHardy}
    \int_{\R^d}|\nabla u(x)|^p\,dx\geq\left(\frac{d-p}{p}\right)^p\int_{\R^d}\frac{|u(x)|^p}{|x|^p}\,dx,\text{  }u\in W^{1,p}(\R^d),\text{  }1\leq p<d.
\end{equation}
If $p>d$, one has
\begin{equation}\label{classicalHardyp>d}
    \int_{\R^d}|\nabla u(x)|^p\,dx\geq\left(\frac{p-d}{p}\right)^p\int_{\R^d}\frac{|u(x)|^p}{|x|^p}\,dx,\text{  }u\in W^{1,p}(\R^d\setminus\{0\}).
\end{equation}
Hardy inequalities play a crucial role in analysis and partial differential equations. The literature concerning them is enormous; we refer the Reader for example to \cite{MR1747888, MR3966452,MR1769903} and references therein.

Recently, there has been a research interest \cite{MR4422783,hardysobanty,gupta} connected to the development of the Hardy inequality for antisymmetric functions. The antisymmetry condition reads
\begin{equation}\label{antisymmetry}
u(x_1,\dots,x_i,\dots,x_j,\dots,x_d)=-u(x_1,\dots,x_j,\dots,x_i,\dots,x_d),\,\,1\leq i\neq j\leq d.
\end{equation}
 Hoffmann-Ostenhof and Laptev \cite[Theorem 1]{MR4422783} established the following improved Hardy inequality for antisymmetric functions,
\begin{equation}\label{hardyantisymmetricp=2}
\int_{\R^d}|\nabla u(x)|^2\,dx\geq\left(\frac{d^2-2}{2}\right)^2\int_{\R^d}\frac{|u(x)|^2}{|x|^2}\,dx.
\end{equation}
The constant $(d^2-2)^2/4$ in \eqref{hardyantisymmetricp=2} is optimal, i.e. it cannot be replaced by any bigger number. Recently, Gupta \cite{gupta} gave an alternative proof of \eqref{hardyantisymmetricp=2} and also proved the discrete version of the antisymmetric Hardy inequality on the lattice $\mathbb{Z}^d$ with asymptotically sharp constant.

Another class of improved Hardy inequalities can be obtained for odd functions, i.e. satisfying $u(-x)=-u(x)$. In \cite{MR2389919} and \cite{MR3335687} the following Hardy inequality for odd functions was derived,
\begin{equation}\label{Hardyoddp=2}
 \int_{\R^d}|\nabla u(x)|^2\,dx\geq\frac{d^2}{4}\int_{\R^d}\frac{|u(x)|^2}{|x|^2}\,dx. 
\end{equation}
The constant $d^2/4$ in \eqref{Hardyoddp=2} is also sharp.

However, the mentioned works only deal with the case $p=2$. In this article we extend the above results to general exponents $p\geq 2$, that is we prove the Hardy inequalities for antisymmetric and odd functions on $L^p(\R^d)$ with improved constants. To this end we need first to define the antisymmetric and odd Sobolev spaces.
\begin{defn}
Let $p\geq 1$, $k=1,2,\dots$. We define the antisymmetric and odd Sobolev spaces $W^{k,p}_A(\R^d)$ and $W^{k,p}_{O}(\R^d)$, respectively, as
\begin{align*}
W^{k,p}_{A}(\R^d)&=\{u\in W^{k,p}(\R^d):\,u\text{ is antisymmetric}\},\\
W^{k,p}_{O}(\R^d)&=\{u\in W^{k,p}(\R^d):\,u\text{ is odd}\}.
\end{align*}
\end{defn}
We clearly have $W^{k,p}_A(\R^d)\subset W^{k,p}(\R^d)$ and $W^{k,p}_{O}(\R^d)\subset W^{k,p}(\R^d)$, hence, one should expect that the Hardy constant can be enlarged for these classes of functions. This is indeed true and is made precise in the next two theorems.
\begin{thm}[\textbf{Hardy inequality for antisymmetric functions}]\label{tw1}
Let $p\geq 2$, $d\geq 2$. Let $u\in W^{1,p}_{A}(\R^d)$. Then, the following improved Hardy inequality holds true,
\begin{equation}\label{Hardyantisymmetric}
\int_{\R^d}|\nabla u(x)|^p\,dx\geq C_H(d,p)\int_{\R^d}\frac{|u(x)|^p}{|x|^p}\,dx.    
\end{equation}
The constant $C_H(d,p)$ is given by
$$
C_H(d,p)=\left(\frac{2(p-2)d(d-1)}{p^2}+\left(\frac{d^2-p}{p}\right)^2\right)^{\frac{p}{2}}.
$$
\end{thm}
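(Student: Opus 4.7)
The first step is to rewrite the stated constant in a more transparent form. A direct computation gives
\begin{equation*}
\frac{2(p-2)d(d-1)}{p^{2}}+\left(\frac{d^{2}-p}{p}\right)^{2}=\left(\frac{d-p}{p}\right)^{2}+\frac{4\lambda_{0}}{p^{2}},
\end{equation*}
where $\ell_{0}:=d(d-1)/2$ and $\lambda_{0}:=\ell_{0}(\ell_{0}+d-2)$, so that $C_{H}(d,p)=\bigl(((d-p)/p)^{2}+4\lambda_{0}/p^{2}\bigr)^{p/2}$. This presents $C_{H}(d,p)$ as the $(p/2)$-th power of the squared classical radial Hardy constant $((d-p)/p)^{2}$ augmented by $4/p^{2}$ times the smallest eigenvalue of the spherical Laplacian acting on antisymmetric spherical harmonics. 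It suggests combining the classical $L^{p}$-radial Hardy inequality with the $L^{2}$-angular improvement that antisymmetry forces.

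The angular improvement rests on the observation that the Vandermonde $V(x)=\prod_{i<j}(x_{i}-x_{j})$ is an antisymmetric harmonic polynomial of degree $\ell_{0}$ and that every antisymmetric polynomial is divisible by $V$. Consequently the spherical-harmonic expansion of $u\in W^{1,p}_{A}(\R^{d})$ contains no mode of degree less than $\ell_{0}$, and for every $r>0$,
\begin{equation*}
\int_{\mathbb{S}^{d-1}}|\nabla_{\omega}u(r\omega)|^{2}\,d\sigma(\omega)\geq\lambda_{0}\int_{\mathbb{S}^{d-1}}|u(r\omega)|^{2}\,d\sigma(\omega).
\end{equation*}
Starting from the polar decomposition $|\nabla u|^{2}=|\partial_{r}u|^{2}+r^{-2}|\nabla_{\omega}u|^{2}$ and the first-order convexity inequality $(a^{2}+b^{2})^{p/2}\ge a^{p}+\tfrac{p}{2}a^{p-2}b^{2}$ (valid for $a,b\ge 0$ and $p\ge 2$), I would integrate over $\R^{d}$, use the displayed angular bound on the cross term (with H\"older in $\omega$ to manage the $|\partial_{r}u|^{p-2}$ weight), and close the radial part with the one-dimensional Hardy inequality $\int_{0}^{\infty}|\partial_{r}u|^{p}r^{d-1}\,dr\ge|(d-p)/p|^{p}\int_{0}^{\infty}|u|^{p}r^{d-1-p}\,dr$ applied ray by ray. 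Since antisymmetric $u$ vanishes at the origin, both the $p<d$ and $p>d$ variants of the one-dimensional Hardy inequality are at one's disposal, yielding a lower bound of the desired shape in either regime.

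The main obstacle is reassembling the radial and angular pieces into the exact form $\bigl(((d-p)/p)^{2}+4\lambda_{0}/p^{2}\bigr)^{p/2}$. The convexity inequality above is only tight to first order in $b$, so a naive sum of the pure radial term and the angular correction produces $|(d-p)/p|^{p}+(\text{angular})$, which is strictly smaller than the stated constant. To close the gap, I would either (i) keep a free parameter $t\in(0,1)$ in the convexity step and optimize it after Young's inequality, choosing $t$ so that the radial and angular pieces reassemble under a single square root; or (ii) work instead with the ground-state substitution $u=|x|^{-(d-p)/p}v$ and invoke a Picone-type identity for the $p$-Laplacian, which reduces the inequality to a weighted $L^{p}$-Poincar\'e estimate on $\mathbb{S}^{d-1}$ whose sharp constant is $4\lambda_{0}/p^{2}$ and directly produces the compact form on the right-hand side.
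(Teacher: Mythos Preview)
Your rewriting of the constant as $\bigl((d-p)^2/p^2+4\lambda_0/p^2\bigr)^{p/2}$ with $\lambda_0=\ell_0(\ell_0+d-2)$, $\ell_0=d(d-1)/2$, is correct and illuminating, and the $L^2$ angular Poincar\'e $\int_{\mathbb{S}^{d-1}}|\nabla_\omega u|^2\,d\sigma\ge\lambda_0\int_{\mathbb{S}^{d-1}}|u|^2\,d\sigma$ indeed holds for antisymmetric $u$. But the reassembly step you flag as ``the main obstacle'' is a genuine gap, and neither of your fixes closes it. For (i): the angular gain is only an \emph{integral} $L^2$ inequality on each sphere, not a pointwise bound $|\nabla_\omega u|^2\ge\lambda_0|u|^2$; hence you cannot feed it back inside $(\,\cdot\,)^{p/2}$ to manufacture the form $\bigl(a^2+4\lambda_0/p^2\bigr)^{p/2}$. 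Any route through the first-order inequality $(a^2+b^2)^{p/2}\ge a^p+\tfrac{p}{2}a^{p-2}b^2$ leaves the weight $|\partial_r u|^{p-2}$ on the $|\nabla_\omega u|^2$ term, and the H\"older step you propose to remove it costs exactly the factor you need to recover the stated constant. Introducing a parameter $t$ does not repair this: you are optimizing over a family of inequalities each of which is already too weak. For (ii): the Picone/ground-state route would reduce the claim to an $L^p$ spherical inequality for antisymmetric functions with constant $4\lambda_0/p^2$, but that statement is not known (and the paper itself leaves sharpness open for $p\neq 2$), so asserting it is circular.

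The paper proceeds by an entirely different mechanism that sidesteps the integral-versus-pointwise mismatch. It works on the chamber $\Omega=\{x_1<\dots<x_d\}$, where the Vandermonde $\mathcal V_d$ is positive and $u$ vanishes on $\partial\Omega$, and uses the divergence-type lower bound
\[
\int_\Omega|\nabla u|^p\ \ge\ \int_\Omega|u|^p\Bigl(\operatorname{div}T-(p-1)|T|^{p/(p-1)}\Bigr)
\]
with the two-parameter vector field $T(x)=\alpha\,x\,|x|^{-p}-\beta\,\nabla\mathcal V_d(x)\,\mathcal V_d(x)^{-1}|x|^{2-p}$. The only angular input is the \emph{pointwise} Euler identity $\langle x,\nabla\mathcal V_d\rangle=\tfrac{d(d-1)}{2}\mathcal V_d$ (together with $\Delta\mathcal V_d=0$ and the Schwarz inequality $|x|\,|\nabla\mathcal V_d|\ge\tfrac{d(d-1)}{2}\mathcal V_d$). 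After this, $\operatorname{div}T-(p-1)|T|^{p/(p-1)}$ becomes $|x|^{-p}$ times an explicit function of the single variable $t=(|x|\,|\nabla\mathcal V_d|/\mathcal V_d)^2\ge(d(d-1)/2)^2$, and a calculus optimization in $(\alpha,\beta)$ delivers $C_H(d,p)$ exactly. The point is that the antisymmetry is used through a pointwise identity for $\mathcal V_d$, not through an $L^2$ spectral gap, so nothing needs to be ``reassembled.''
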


\begin{thm}[\textbf{Hardy inequality for odd functions}]\label{tw2}
Let $p\geq 2$, $d\geq 1$. Let $u\in W^{1,p}_{O}(\R^d)$. Then, the following improved Hardy inequality holds true,
\begin{equation}\label{Hardyodd}
\int_{\R^d}|\nabla u(x)|^p\,dx\geq D_H(d,p)\int_{\R^d}\frac{|u(x)|^p}{|x|^p}\,dx.    
\end{equation}
The constant $D_H(d,p)$ is given by
$$
D_H(d,p)=\left(\frac{4(p-2)}{p^2}+\left(\frac{d-p+2}{p}\right)^2\right)^{\frac{p}{2}}.
$$
\end{thm}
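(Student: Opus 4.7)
The plan is to follow the approach of Hoffmann-Ostenhof and Laptev~\cite{MR4422783} for $p=2$, adapted to general $p\geq 2$. First I would work in spherical coordinates $x = r\omega$ with $r = |x|$ and $\omega \in S^{d-1}$, and use the pointwise decomposition $|\nabla u|^2 = |\partial_r u|^2 + |\nabla_\omega u|^2/|x|^2$, where $\partial_r u = (x/|x|)\cdot\nabla u$ is the radial derivative and $\nabla_\omega$ denotes the (Euclidean) tangential gradient along spheres. The oddness of $u$ enters through a spherical Poincar\'e inequality: for each fixed $r>0$, the slice $\omega\mapsto u(r\omega)$ is odd on $S^{d-1}$ and hence lies in the $L^2$-span of spherical harmonics of degree $\ell\geq 1$, which yields
\begin{equation*}
\int_{S^{d-1}} |\nabla_\omega u(r\omega)|^2\,d\omega \geq (d-1)\int_{S^{d-1}} |u(r\omega)|^2\,d\omega,
\end{equation*}
with $d-1$ being the lowest non-trivial Laplace--Beltrami eigenvalue on $S^{d-1}$.

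For $p = 2$ this already completes the proof: the radial part contributes $((d-2)/2)^2$ via the classical vector-field Hardy argument with $V = x/|x|^p$, while the tangential part contributes $d - 1$, and the two sum to $d^2/4 = D_H(d,2)$.

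For $p > 2$, rewrite the target as $D_H(d,p) = (\gamma^2 + \mu)^{p/2}$ with $\gamma = (d-p)/p$ and $\mu = 4(d-1)/p^2$. The proof would split into two main steps. In \emph{Step~A}, I would use the convexity of $t\mapsto t^{p/2}$ on $[0,\infty)$, namely $(A+B)^{p/2}\geq A^{p/2} + \tfrac{p}{2}A^{(p-2)/2}B$, together with the spherical Poincar\'e inequality and a H\"older-type estimate in the angular variable, to obtain the reduction
\begin{equation*}
\int_{\R^d}|\nabla u|^p\,dx \geq \int_{\R^d}\bigl(|\partial_r u|^2 + \mu\,|u|^2/|x|^2\bigr)^{p/2}\,dx.
\end{equation*}
In \emph{Step~B}, a one-dimensional Hardy-with-mass estimate, proved via the classical ground-state substitution $u = |x|^{-\gamma}v$ inserted inside the nonlinear integrand, would give
\begin{equation*}
\int_{\R^d}\bigl(|\partial_r u|^2 + \mu\,|u|^2/|x|^2\bigr)^{p/2}\,dx \geq (\gamma^2+\mu)^{p/2}\int_{\R^d}\frac{|u|^p}{|x|^p}\,dx.
\end{equation*}
Chaining the two would yield Theorem~\ref{tw2}.

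The main obstacle will be Step~A: the spherical Poincar\'e inequality is intrinsically an $L^2$-bound on the sphere, whereas the Hardy inequality we are after is $L^p$. Matching the specific coefficient $\mu = 4(d-1)/p^2$ (rather than the naive guess $d-1$) requires a careful combination of the convexity factor $p/2$ with H\"older's inequality in the angular variable; it is precisely the interplay of these two that shrinks $d-1$ to the correct value $4(d-1)/p^2$ for $p>2$.
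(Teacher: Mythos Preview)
Your approach is genuinely different from the paper's: you try a radial--angular decomposition together with the $L^2$ spherical Poincar\'e inequality, whereas the paper uses the divergence/vector-field method. Concretely, the paper factorises an odd $u$ as $u=h\varphi$ with $h$ a linear odd polynomial (so $\sum x_k\partial_k h=h$), restricts to the half-space $\Omega_+=\{h>0\}$ where $u$ vanishes on the boundary, and applies the general Hardy inequality $\int|\nabla u|^p\geq\int|u|^p(\divv T-(p-1)|T|^{p/(p-1)})$ with the two-parameter field $T=\alpha x/|x|^p-\beta\nabla h/(h|x|^{p-2})$. Optimising in $(\alpha,\beta)$ reproduces exactly $D_H(d,p)$; the computation is the same as in the antisymmetric case with $\lambda=1$ in place of $d(d-1)/2$.

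Your Step~B is fine: the substitution $u=r^{-\gamma}v$ with $\gamma=(d-p)/p$, the convexity bound $(\cdot)^{p/2}\geq(\gamma^2+\mu)^{p/2}|v|^p+\tfrac{p}{2}(\gamma^2+\mu)^{(p-2)/2}|v|^{p-2}(-2\gamma r vv'+r^2(v')^2)$, and an integration by parts in $r$ do give the stated inequality.

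The real problem is Step~A. Linearising $t\mapsto t^{p/2}$ at $|\partial_r u|^2+\mu|u|^2/r^2$ shows that your claimed inequality
\[
\int_{\R^d}|\nabla u|^p\,dx\;\geq\;\int_{\R^d}\bigl(|\partial_r u|^2+\mu|u|^2/|x|^2\bigr)^{p/2}\,dx
\]
would follow from the \emph{weighted} spherical Poincar\'e inequality
\[
\int_{S^{d-1}}W(\omega)\,|\nabla_\omega u|^2\,d\omega\;\geq\;\mu\int_{S^{d-1}}W(\omega)\,|u|^2\,d\omega,
\qquad W=\bigl(|\partial_r u|^2+\mu|u|^2/r^2\bigr)^{(p-2)/2},
\]
for each fixed $r$. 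But $W$ is an essentially arbitrary nonnegative weight: on a given sphere one can prescribe $\partial_r u$ almost independently of $u$, and by concentrating $\partial_r u$ (hence $W$) near the extrema of $u$ --- where $|\nabla_\omega u|^2-\mu|u|^2$ is negative --- the weighted inequality fails. So ``convexity plus H\"older'' cannot produce Step~A with the precise constant $\mu=4(d-1)/p^2$; the factor $p/2$ from convexity is indeed present, but there is no legitimate lower H\"older bound on $\int W|\nabla_\omega u|^2$ to pair with it. A related obstruction blocks the one-shot version of the argument: linearising directly at $(\gamma^2+\mu)|u|^2/r^2$ reduces to an $L^2$ weighted Hardy inequality for $|u|^{p/2}$, but $|u|^{p/2}$ is \emph{even}, so the oddness improvement is lost and you recover only the classical constant.

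In short, the $L^2$ Poincar\'e gain on the sphere does not survive the passage to $L^p$ via convexity in the way you outline. The paper's vector-field method sidesteps this entirely by encoding the oddness through the factor $h$ inside the test field $T$ rather than through a spectral inequality.
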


We do not know if our constants $C_H(d,p)$ and $D_H(d,p)$ are optimal or what is happening in the remaining case $1<p<2$. Firstly, our methods does not work for $1<p<2$. Moreover, it seems to be difficult to prove the sharpness of the constants. The proof of the sharp antisymmetric Hardy inequality for $p=2$ from \cite{MR4422783} relies on orthogonal expansions into spherical harmonics. Such methods are usually not applicable in the $L^p$ setting. In addition, it is hard to obtain the explicit expressions for $L^p$ norms of non-radial functions. Nevertheless, the constant $C_H(d,p)$ in \eqref{Hardyantisymmetric} has the asymptotic $\left(d^2/p\right)^p$, while the constant from classical Hardy inequality is of order $\left(d/p\right)^p$, as $d\rightarrow\infty$. The constant $D_H(d,p)$ from \eqref{Hardyodd} has the same growth as the classical Hardy constant. Observe that both constants coincide for $d=2$. One can quickly check that both $C_H(d,p)$ and $D_H(d,p)$ are strictly greater than $\left(|d-p|/p\right)^p$ with one exception: for $d=1$ the constant $D_H(1,p)$ equals $\left((p-1)/p\right)^p$. On the other side, for fixed $d$ it holds
$$
\lim_{p\rightarrow\infty}\left(\frac{p-d}{p}\right)^p=\lim_{p\rightarrow\infty}C_H(d,p)=\lim_{p\rightarrow\infty}D_H(d,p)=e^{-d},
$$
which shows that in this setting all constants are asymptotically equivalent, when $p\rightarrow\infty$.

Moreover, the classical Hardy constant $\left(|d-p|/p\right)^p$ equals zero for $p=d$, that is in this case the Hardy inequality \eqref{classicalHardy} is trivial. However, the constants $C_H(p,p)$ and $D_H(p,p)$ do not vanish. This is a great advantage of considering the narrower classes of functions.

With a slight modification of the proof, one can easily obtain weighted versions of the Hardy inequalities \eqref{Hardyantisymmetric} and \eqref{Hardyodd}. This result is stated in the next theorem.
\begin{thm}[\textbf{Weighted Hardy inequalities for antisymmetric and odd functions}]\label{Hardyweighted}
Let $\gamma\in\R$ and let $p\geq 2$.\\
i) If $u\in C^1_c(\R^d\setminus\{0\})$ is antisymmetric and $d\geq 2$, then 
$$
\int_{\R^d}\frac{|\nabla u(x)|^p}{|x|^{\gamma}}\,dx\geq C_H(d,p,\gamma)\int_{\R^d}\frac{|u(x)|^p}{|x|^{p+\gamma}}\,dx,
$$
where 
$$
C_H(d,p,\gamma)=\left(\frac{2(p-2+\gamma)d(d-1)}{p^2}+\left(\frac{d^2-p-\gamma}{p}\right)^2\right)^{\frac{p}{2}};
$$
ii) if  $u\in C^1_c(\R^d\setminus\{0\})$ is odd and $d\geq 1$, then
$$
\int_{\R^d}\frac{|\nabla u(x)|^p}{|x|^{\gamma}}\,dx\geq D_H(d,p,\gamma)\int_{\R^d}\frac{|u(x)|^p}{|x|^{p+\gamma}}\,dx,
$$
where 
$$
D_H(d,p,\gamma)=\left(\frac{4(p-2+\gamma)}{p^2}+\left(\frac{d-p-\gamma+2}{p}\right)^2\right)^{\frac{p}{2}}.
$$
\end{thm}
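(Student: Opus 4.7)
The plan is to repeat the arguments used to prove Theorems~\ref{tw1} and~\ref{tw2}, with the test vector field $x/|x|^p$ replaced by its weighted analogue $x/|x|^{p+\gamma}$. Since $|x|^{-\gamma}$ is radial, it commutes with the orthogonal decomposition $\nabla u = (\partial_r u)\hat{r} + \nabla_S u/r$ that underlies the antisymmetric or odd improvement, so the weight does not interact with the symmetry-exploiting step; this is exactly why the author calls the modification ``slight''.

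Concretely, for part~(i), I would start from the identity
$$
\operatorname{div}\!\left(\frac{x}{|x|^{p+\gamma}}\right) = \frac{d-p-\gamma}{|x|^{p+\gamma}},
$$
integrate by parts against $|u|^{p-2}u$ (valid without boundary terms since $u \in C_c^1(\R^d\setminus\{0\})$), and combine with the antisymmetric sharpening from the proof of Theorem~\ref{tw1} to produce the coefficient $(d^2-p-\gamma)/p$ in place of $(d^2-p)/p$. Then I would apply H\"older's inequality with conjugate exponents $p/(p-1)$ and $p$, using the splitting
$$
\frac{1}{|x|^{p-1+\gamma}} = \frac{1}{|x|^{(p-1)(p+\gamma)/p}}\cdot\frac{1}{|x|^{\gamma/p}},
$$
so that the two factors carry exactly the weights $|x|^{-(p+\gamma)}\,dx$ and $|x|^{-\gamma}\,dx$ appearing on the two sides of the desired inequality. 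Raising the resulting estimate to the $p$-th power yields $C_H(d,p,\gamma)$. Part~(ii) is handled identically using the odd-function sharpening from Theorem~\ref{tw2}, producing $D_H(d,p,\gamma)$.

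The main obstacle is careful bookkeeping: the single parameter $\gamma$ must be tracked through both the divergence-generated shift $p\mapsto p+\gamma$ in the term $(d^2-p-\gamma)^2/p^2$ and the sharpening-generated shift $p-2\mapsto p-2+\gamma$ in the coefficient $2(p-2+\gamma)d(d-1)/p^2$, and the two resulting terms must combine under a common $(\,\cdot\,)^{p/2}$ envelope to reproduce the stated constants. The second shift is the more delicate one: it arises because the antisymmetric/odd refinement is itself a H\"older-type inequality whose parameter shifts by $\gamma$ once the weight $|x|^{-\gamma}$ is inserted into the measure. No additional regularity issue appears because $u$ is compactly supported away from the origin, and the method inherits the restriction $p\geq 2$ from the refinement step.
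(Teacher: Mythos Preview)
Your sketch drifts away from the mechanism that actually produces the constant. In the paper the weight is \emph{not} absorbed into the test vector field: one keeps exactly the same
\[
T(x)=\frac{\alpha x}{|x|^{p}}-\frac{\beta\,\nabla\Vd(x)}{\Vd(x)\,|x|^{p-2}}
\]
as in Theorem~\ref{tw1} and instead derives a weighted version of the divergence lemma~\eqref{generalHardyT}, namely
\[
\int_{\Omega}\frac{|\nabla u|^p}{|x|^{\gamma}}\,dx\ \geq\ \int_{\Omega}\frac{|u|^p}{|x|^{\gamma}}\Bigl[\divv T-(p-1)|T|^{\frac{p}{p-1}}-\gamma\,\frac{\langle x,T\rangle}{|x|^{2}}\Bigr]\,dx,
\]
obtained by integrating $|u|^{p}|x|^{-\gamma}\divv T$ by parts and applying H\"older and Young once. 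Since $\langle x,T(x)\rangle=(\alpha-\beta\lambda)/|x|^{p-2}$, the new term contributes $-\gamma(\alpha-\beta\lambda)/|x|^{p}$, which \emph{simultaneously} shifts $\alpha(d-p)\to\alpha(d-p-\gamma)$ and $\beta(p-2)\lambda\to\beta(p-2+\gamma)\lambda$ inside $f(t;\alpha,\beta)$. The two-parameter optimisation over $(\alpha,\beta)$ then runs verbatim and delivers $C_H(d,p,\gamma)$ with its $(\,\cdot\,)^{p/2}$ envelope; the odd case is identical with $\lambda=1$.

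Your outline instead modifies the radial piece of $T$ to $x/|x|^{p+\gamma}$, treats the ``antisymmetric sharpening'' as a separate subsequent step, and hopes the two contributions land under a common $(\,\cdot\,)^{p/2}$ by bookkeeping. That envelope, however, is not produced by adding two estimates and raising to a power: it arises because $|T|^{p/(p-1)}$ carries the exponent $\tfrac{p}{2(p-1)}$ on the \emph{joint} quadratic $\alpha^{2}-2\alpha\beta\lambda+\beta^{2}t$, and only after minimising in $t$ and maximising in $(\alpha,\beta)$ does the sum-under-$p/2$ appear. If you put $|x|^{-\gamma}$ into $T$ itself and use the unweighted lemma~\eqref{generalHardyT}, the term $(p-1)|T|^{p/(p-1)}$ acquires the power $|x|^{-p-\gamma p/(p-1)}$ rather than $|x|^{-p-\gamma}$, so the homogeneities no longer match and the argument does not close. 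The radial--spherical decomposition you invoke plays no role in the paper's proof for general $p$.
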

\begin{rem}
For the most important case $p=2$  we have
\begin{align*}
C_H(d,2,\ga)&=\frac{\ga\,d(d-1)}{2}+\left(\frac{d^2-2-\ga}{2}\right)^2,\\
D_H(d,2,\ga)&=\ga+\left(\frac{d-\ga}{2}\right)^2.
\end{align*}
These constants are sharp: this fact can be proved the exact same way as in \cite[Proposition 2]{MR4422783} by considering functions being products of the radial part and an appropriate spherical harmonic.
\end{rem}
\subsection{Rellich inequalities}
Our next goal is to obtain improved versions of Rellich inequality for classes of antisymmetric and odd functions. The classical Rellich inequality \cite{MR0088624} states that for any dimension $d\geq 5$ and $u\in W^{2,2}(\R^d)$,
\begin{equation}\label{Rellichp=2}
\int_{\R^d}|\Delta u(x)|^2\,dx\geq\frac{d^2(d-4)^2}{16}\int_{\R^d}\frac{|u(x)|^2}{|x|^4}\,dx.
\end{equation}
Davies and Hinz \cite{MR1612685} extended \eqref{Rellichp=2} to the case of arbitrary $p>1$. The (most general) weighted $L^p$ version of the Rellich inequality can be found in Mitidieri's work \cite[Theorem 3.1]{MR1769903}: for any $p>1$, $d\geq 1$, $-(p-1)d<\gamma<d-2p$ and $u\in C_c^{2}(\R^d\setminus\{0\})$, the following inequality holds true,
\begin{align*}\label{weightedRellich}
\int_{\R^d}\frac{|\Delta u(x)|^p}{|x|^{\gamma}}\,dx\geq c(p,d,\gamma)\int_{\R^d}\frac{|u(x)|^p}{|x|^{\gamma+2p}}\,dx,   
\end{align*}
where the sharp constant $c(d,p,\gamma)$ equals
$$
c(p,d,\gamma)=\left(\frac{(d-\gamma-2p)((p-1)d+\gamma)}{p^2}\right)^p.
$$
For more information about Rellich inequlities we refer the Reader for example to \cite{MR1612685,MR3934125} and references therein. A research connected to our setting can also be found in \cite{MR4543806}.

In this article we prove Rellich inequalities with better constants for antisymmetric and odd functions. These results read as follows.
\begin{thm}[\textbf{Weighted Rellich inequality for antisymmetric functions}]\label{WeightedRellichanti}
Let $p>1$, $d\geq 2$ and $u\in C_c^2(\R^d\setminus\{0\})$ be antisymmetric. Then the following weighted Rellich inequality holds,
\begin{equation}\label{weightedRellichantisymmetric}
\int_{\R^d}\frac{|\Delta u(x)|^p}{|x|^{\gamma}}\,dx\geq C_R(p,d,\ga)\int_{\R^d}\frac{|u(x)|^p}{|x|^{\ga+2p}}\,dx,
\end{equation}
where 
$$
C_R(d,p,\ga)=\left(\frac{(\ga+2p-2)\left(2(p-1)d(d-1)+p(d-\ga-2p)\right)+(p-1)\left(d^2-\ga-2p\right)^2}{p^2}\right)^p,
$$
provided that $\ga$ is such that the numerator of the above is nonnegative.
\end{thm}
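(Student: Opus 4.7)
The plan is to adapt Mitidieri's proof of the classical weighted Rellich inequality, combining a standard integration-by-parts identity with H\"older's inequality, and replacing the use of the classical weighted Hardy estimate at the key intermediate step by the weighted $L^{2}$ Hardy inequality for antisymmetric functions recorded in the Remark following Theorem \ref{Hardyweighted}. The antisymmetry of $u$ is transferred to the auxiliary function $w:=u|u|^{(p-2)/2}$, which is itself antisymmetric and satisfies the pointwise identity $|\nabla w|^{2}=(p/2)^{2}|u|^{p-2}|\nabla u|^{2}$; the stronger Hardy constant available for antisymmetric $w$ is precisely what upgrades the Mitidieri constant to $C_{R}(d,p,\gamma)$.

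Concretely, set $\alpha:=\gamma+2p-2$, $L:=\int_{\R^{d}}|\Delta u|^{p}/|x|^{\gamma}\,dx$, $R:=\int_{\R^{d}}|u|^{p}/|x|^{\gamma+2p}\,dx$, and $I:=\int_{\R^{d}}|u|^{p-2}|\nabla u|^{2}/|x|^{\alpha}\,dx$. Using $\nabla(u|u|^{p-2})=(p-1)|u|^{p-2}\nabla u$ and $\divv(x/|x|^{\alpha+2})=(d-\alpha-2)/|x|^{\alpha+2}$, one integration by parts followed by the divergence theorem on the radial term yields
\begin{equation*}
\int_{\R^{d}}\frac{\Delta u\cdot u|u|^{p-2}}{|x|^{\alpha}}\,dx=-(p-1)\,I-\frac{(\gamma+2p-2)(d-\gamma-2p)}{p}\,R.
\end{equation*}
The exponent $\alpha$ is chosen so that H\"older's inequality gives
\begin{equation*}
\left|\int_{\R^{d}}\frac{\Delta u\cdot u|u|^{p-2}}{|x|^{\alpha}}\,dx\right|\leq L^{1/p}R^{(p-1)/p},
\end{equation*}
and combining these two displays (using $|y|\geq y$ for every real $y$) gives
\begin{equation*}
L^{1/p}R^{(p-1)/p}\geq (p-1)\,I+\frac{(\gamma+2p-2)(d-\gamma-2p)}{p}\,R.
\end{equation*}

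The decisive step is the lower bound on $I$ provided by antisymmetry. Since $|u|$ is invariant under every coordinate transposition, $w:=u|u|^{(p-2)/2}$ inherits the antisymmetry of $u$ and is compactly supported away from the origin, with $|\nabla w|^{2}=(p/2)^{2}|u|^{p-2}|\nabla u|^{2}$. Applying the weighted $L^{2}$ antisymmetric Hardy inequality with weight $\alpha$ to $w$ then produces
\begin{equation*}
I=\frac{4}{p^{2}}\int_{\R^{d}}\frac{|\nabla w|^{2}}{|x|^{\alpha}}\,dx\geq\frac{2(\gamma+2p-2)\,d(d-1)+(d^{2}-\gamma-2p)^{2}}{p^{2}}\,R.
\end{equation*}
Inserting this estimate into the previous display and collecting terms shows that the coefficient of $R$ on the right-hand side equals exactly $C_{R}(d,p,\gamma)^{1/p}$; under the stated non-negativity assumption on the numerator, one may raise both sides to the $p$th power to conclude \eqref{weightedRellichantisymmetric}.

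The main technical obstacle is the regularity of $w$ when $1<p<2$: the factor $|u|^{(p-2)/2}$ is singular on the zero set of $u$, so $w$ is generally not in $C^{1}_{c}(\R^{d}\setminus\{0\})$ and the antisymmetric Hardy inequality from Theorem \ref{Hardyweighted} cannot be invoked on $w$ directly. This is handled by the standard regularization $w_{\varepsilon}:=u(u^{2}+\varepsilon)^{(p-2)/4}$, which is smooth, antisymmetric, and compactly supported away from the origin for every $\varepsilon>0$; one applies the antisymmetric Hardy inequality to $w_{\varepsilon}$ and then passes to the limit $\varepsilon\downarrow0$ by dominated convergence.
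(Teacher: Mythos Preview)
Your argument is correct and follows essentially the same strategy as the paper: couple the Mitidieri/Davies--Hinz integration-by-parts identity with H\"older's inequality, and upgrade the constant by applying the weighted $L^{2}$ antisymmetric Hardy inequality (Theorem~\ref{Hardyweighted} with $p=2$ and weight $\gamma+2p-2$) to an auxiliary ``half-power'' of $u$. The packaging differs slightly: the paper restricts to the Weyl chamber $\Omega=\{x_{1}<\dots<x_{d}\}$, approximates $u\!\restriction_{\Omega}$ by $u_{n}\in C_{c}^{\infty}(\Omega)$, and uses the \emph{nonnegative} auxiliary functions $w_{\varepsilon}^{(n)}=(u_{n}^{2}+\varepsilon^{2})^{p/4}-\varepsilon^{p/2}$, which are not antisymmetric but vanish on $\partial\Omega$ (so the vector-field Hardy proof on $\Omega$ still applies). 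Your choice $w=u|u|^{(p-2)/2}$, being genuinely antisymmetric on $\R^{d}$, lets you invoke Theorem~\ref{Hardyweighted} as stated and avoids the passage to $\Omega$ and the extra approximation by $u_{n}$; this is a mild streamlining.

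One small point: for $1<p<2$ the regularity issue you flag at the Hardy step already appears one step earlier. The identity $\int_{\R^{d}}\Delta u\cdot u|u|^{p-2}|x|^{-\alpha}\,dx=-(p-1)I-\tfrac{\alpha(d-\alpha-2)}{p}R$ uses that $u|u|^{p-2}$ is (weakly) differentiable with gradient $(p-1)|u|^{p-2}\nabla u\in L^{1}$, which is not immediate when $p<2$. The cleanest fix is to run your $\varepsilon$-regularization uniformly from the start (replace $u|u|^{p-2}$ by $u(u^{2}+\varepsilon)^{(p-2)/2}$ in the integration-by-parts step and $w$ by $w_{\varepsilon}=u(u^{2}+\varepsilon)^{(p-2)/4}$ in the Hardy step, then let $\varepsilon\downarrow0$); this is exactly what the paper does. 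With that adjustment the argument is complete.
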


\begin{thm}[\textbf{Weighted Rellich inequality for odd functions}]\label{WeightedRellichodd}
Let $p>1$, $d\geq 1$ and $u\in C_c^2(\R^d\setminus\{0\})$ be odd. Then the following weighted Rellich inequality holds,
\begin{equation}\label{weightedRellichodd}
\int_{\R^d}\frac{|\Delta u(x)|^p}{|x|^{\gamma}}\,dx\geq D_R(p,d,\ga)\int_{\R^d}\frac{|u(x)|^p}{|x|^{\ga+2p}}\,dx,
\end{equation}
where 
$$
D_R(d,p,\ga)=\left(\frac{(\ga+2p-2)(4(p-1)+p(d-\ga-2p))+(p-1)(d-\ga-2p+2)^2}{p^2}\right)^p.
$$
provided that $\ga$ is such that the numerator of the above is nonnegative.
\end{thm}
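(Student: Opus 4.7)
\medskip

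\noindent\textbf{Proof plan.} Write $\beta=\gamma+2p$ for brevity. The plan is to pair a Rellich-type integration-by-parts identity with the improved weighted Hardy inequality for odd functions (Theorem~\ref{Hardyweighted}~ii)) applied to a nonlinear transform of $u$ that preserves oddness. The guiding idea is that the extra improvement in $D_H$ over the classical Hardy constant will propagate through the standard Davies--Hinz / Mitidieri scheme and yield the extra term $(\gamma+2p-2)\cdot 4(p-1)/p^2$ in $D_R$.

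\medskip

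\noindent\textbf{Step 1 (identity).} Since $\Delta(|x|^{2-\beta})=(2-\beta)(d-\beta)|x|^{-\beta}$ away from the origin, integrating by parts twice on the compact set $\supp u\subset \R^d\setminus\{0\}$ gives
\begin{equation*}
(2-\beta)(d-\beta)\int_{\R^d}\frac{|u|^p}{|x|^{\beta}}\,dx
=p(p-1)\int_{\R^d}\frac{|u|^{p-2}|\nabla u|^2}{|x|^{\beta-2}}\,dx
+p\int_{\R^d}\frac{|u|^{p-2}u\,\Delta u}{|x|^{\beta-2}}\,dx.
\end{equation*}
This is straightforward when $p\ge 2$; for $1<p<2$ one replaces $|u|^p$ by $(u^2+\varepsilon^2)^{p/2}$, performs the same computation, and lets $\varepsilon\to 0$.

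\medskip

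\noindent\textbf{Step 2 (odd transform and improved Hardy).} Set $v:=|u|^{(p-2)/2}u$. Direct differentiation on $\{u\neq 0\}$ yields $|\nabla v|^2=(p/2)^2|u|^{p-2}|\nabla u|^2$ and $v^2=|u|^p$. Crucially, because $u$ is odd, so is $v$. Theorem~\ref{Hardyweighted}~ii) with exponent $p=2$ and weight parameter $\gamma'=\beta-2$ (applied after a standard mollification/cutoff of $v$ away from $\{u=0\}$) therefore gives
\begin{equation*}
\int_{\R^d}\frac{|u|^{p-2}|\nabla u|^2}{|x|^{\beta-2}}\,dx
=\frac{4}{p^2}\int_{\R^d}\frac{|\nabla v|^2}{|x|^{\beta-2}}\,dx
\ge \frac{4}{p^2}\Bigl[(\beta-2)+\tfrac{(d-\beta+2)^2}{4}\Bigr]\int_{\R^d}\frac{|u|^p}{|x|^{\beta}}\,dx.
\end{equation*}

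\medskip

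\noindent\textbf{Step 3 (H\"older and assembly).} Bound the $\Delta u$ term in Step~1 by H\"older's inequality with exponents $p,p/(p-1)$ and the weight split $|x|^{-(\beta-2)}=|x|^{-\gamma/p}\cdot |x|^{-\beta(p-1)/p}$:
\begin{equation*}
p\Bigl|\int_{\R^d}\tfrac{|u|^{p-2}u\,\Delta u}{|x|^{\beta-2}}\,dx\Bigr|
\le p\Bigl(\int_{\R^d}\tfrac{|\Delta u|^p}{|x|^{\gamma}}\,dx\Bigr)^{1/p}\Bigl(\int_{\R^d}\tfrac{|u|^p}{|x|^{\beta}}\,dx\Bigr)^{(p-1)/p}.
\end{equation*}
Rearranging Step~1 as $p(p-1)\int\!|u|^{p-2}|\nabla u|^2|x|^{2-\beta}-(2-\beta)(d-\beta)\int\!|u|^p|x|^{-\beta}=-p\int\!|u|^{p-2}u\,\Delta u\,|x|^{2-\beta}$, using Step~2 on the first summand and the H\"older bound on the right, and collecting the coefficient of $\int |u|^p|x|^{-\beta}$ produces exactly the numerator of $D_R(d,p,\gamma)^{1/p}$ divided by $p$. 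Dividing by $\int|u|^p|x|^{-\beta}$ (assumed nonzero) and raising to the $p$-th power yields \eqref{weightedRellichodd}. The hypothesis that the numerator is nonnegative is precisely what ensures the collected coefficient is nonnegative so that this final step is legitimate.

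\medskip

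\noindent\textbf{Main obstacle.} The delicate point is rigorously justifying Step~2: for $1<p<4$ the function $v=|u|^{(p-2)/2}u$ is not $C^1$ at zeros of $u$, and Theorem~\ref{Hardyweighted}~ii) is stated for $C^1_c(\R^d\setminus\{0\})$. One therefore needs to approximate $v$ by a sequence of odd $C^1_c(\R^d\setminus\{0\})$ functions (e.g.\ via $(u^2+\varepsilon^2)^{(p-2)/4}u$, followed by a standard radial cutoff and mollification that preserves oddness) and then pass to the limit in the inequality. Once this and the analogous regularization in Step~1 are in place, the remainder is the algebraic verification that the constant emerging from Steps~1--3 coincides with $D_R(d,p,\gamma)$.
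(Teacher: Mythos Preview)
Your proof is correct and follows the same Davies--Hinz/Mitidieri scheme as the paper: the identity for $\Delta(|u|^p)$ against the weight $|x|^{2-\beta}$, the improved weighted Hardy inequality at exponent $2$, and H\"older. The one genuine twist is in how you invoke the improved Hardy step. The paper (following its antisymmetric proof verbatim) restricts to the half-space $\Omega_+=\{h>0\}$ from Proposition~\ref{propositionodd}, approximates $u$ by functions compactly supported in $\Omega_+$, and applies the improved Hardy inequality there to the (regularized) transform $w_\va=(u^2+\va^2)^{p/4}-\va^{p/2}$, which is \emph{not} odd but does vanish on $\partial\Omega_+$. You instead stay on all of $\R^d$ and exploit the observation that $v=|u|^{(p-2)/2}u$ is odd whenever $u$ is, so Theorem~\ref{Hardyweighted}~ii) applies directly to $v$ (or its regularization $(u^2+\va^2)^{(p-2)/4}u$). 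Since $|v|^2=|u|^p$ and $|\nabla v|^2=(p/2)^2|u|^{p-2}|\nabla u|^2$, both routes feed the same quantities into the same Hardy constant and yield the same $D_R(d,p,\gamma)$. Your variant is a touch cleaner: it avoids introducing the $u$-dependent domain $\Omega_+$ and the extra approximation by functions compactly supported inside it. The regularization you flag in the ``Main obstacle'' (needed for $1<p<2$) is exactly the one the paper uses, and since $(u^2+\va^2)^{(p-2)/4}u$ is odd, smooth, and supported in $\supp u\subset\R^d\setminus\{0\}$, Theorem~\ref{Hardyweighted}~ii) applies to it without further cutoff.
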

The conditions on $\ga$ in the inequalities above can be stated explicitly, but take a complicated form. Similarly as in the case of Hardy inequalities, we do not know if these constants are sharp, unless $p=2$. By iteration, one can quickly obtain appropriate antisymmetric and odd Rellich inequalities for the higher powers of the Laplacian, which are of the form
$$
\int_{\R^d}\frac{|\Delta^m u(x)|^p}{|x|^{\ga}}\,dx\geq C(d,\ga,p,m)\int_{\R^d}\frac{|u(x)|^p}{|x|^{\ga+2mp}}\,dx,
$$
with explicit constants.

For $p=2$ the constants $C_R(d,p)$ and $D_R(d,p)$ simplify to
\begin{align*}
C_R(d,2,\ga)&=\frac{\left(d^4-4d^2-\ga(\ga+4)\right)^2}{16},\,-d^2<\ga<d^2-4,\\
D_R(d,2,\ga)&=\frac{\left(d^2-(\ga+2)^2\right)^2}{16},\,-d-2<\ga<d-2.
\end{align*}
 In particular, for the most important case $\ga=0$, we obtain two Rellich inequalities, which deserve to be formulated explicitly:
\begin{align*}
\int_{\R^d}|\Delta u(x)|^2\,dx&\geq\frac{d^4(d^2-4)^2}{16}\int_{\R^d}\frac{|u(x)|^2}{|x|^4}\,dx,\,u\in W^{2,2}_{A}(\R^d),\,d\geq 3,\\
\int_{\R^d}|\Delta u(x)|^2\,dx&\geq\frac{\left(d^2-4\right)^2}{16}\int_{\R^d}\frac{|u(x)|^2}{|x|^4}\,dx,\,u\in W^{2,2}_{O}(\R^d),\,d\geq 3.    
\end{align*}
\begin{rem}
Compared to the classical Rellich inequality \eqref{Rellichp=2}, we may allow to have the dimensions $d=3,4$ in the inequalities above. Moreover, the constant from the antisymmetric Rellich inequality grows like $d^8/16$, as $d\rightarrow\infty$, in contrast to the growth of the order $d^4/16$ from \eqref{Rellichp=2}. In general, we have 
$$C_R(d,p,\ga)\sim\left(\frac{(p-1)d^4}{p^2}\right)^p$$ 
and  
$$D_R(d,p,\ga)\sim\left(\frac{(p-1)d^2}{p^2}\right)^p.$$
\end{rem}
\subsection{Applications}
Our improved Hardy inequalities can be applied to obtain results concerning spectral properties of the Schrödinger-type operator with Dirichlet $p$-Laplacian. Recall that, for $p>1$, the latter is defined as 
$$
-\Delta_p u=-\divv\left(\left|\nabla u\right|^{p-2}\nabla u\right),\,u\in C_c^{\infty}(\R^d).
$$
For a given nonnegative measurable potential $V$ we define the operator
$$
Hu=-\Delta_pu-Vu|u|^{p-2},\,u\in C_c^{\infty}(\R^d)
$$
and its quadratic form
$$
\langle Hu,u\rangle=\int_{\R^d}\left(|\nabla u(x)|^p-V(x)|u(x)|^p\right)\,dx.
$$
When $V(x)=|x|^{-p}$ is the Hardy potential, our Theorems \ref{tw1} and \ref{tw2} state that 
\begin{align*}
-\Delta_p\geq C_H(d,p)\frac{|\cdot|^{p-2}}{|x|^p},\,u\in W_{A}^{1,p}(\R^d),\\
-\Delta_p\geq D_H(d,p)\frac{|\cdot|^{p-2}}{|x|^p},\,u\in W_{O}^{1,p}(\R^d)
\end{align*}
in the sense of quadratic form, that is, the operators $H_1=-\Delta_p-C_H(d,p)V|\cdot|^{p-2}$ and $H_2=-\Delta_p-D_H(d,p)V|\cdot|^{p-2}$ are nonnegative on $W_{A}^{1,p}(\R^d)$ and $W_{O}^{1,p}(\R^d)$, respectively. For general potentials $V$, one can try to find conditions under which the operator $H$ is nonnegative, using the same approach as in \cite{MR4422783}.

Similar properties can be derived from our improved Rellich inequalities as well. The $p$-Laplace operator must be then replaced by $-\Delta\left(|\Delta u|^{p-2}\Delta u\right)$. In particular, when $p=2$, the above can be applied to study fourth-order elliptic and parabolic PDE's, connected to the Schrödinger-type operator $(-\Delta)^2-V$; see e.g. \cite{OwenMark} for details.\\

\section{Hardy inequality for antisymmetric functions}\label{Section2}
\subsection{Approximation by smooth, compactly supported functions}
We begin this section by stating a useful density fact, which we will need later on. It is well known that $C_c^{\infty}(\R^d)$ is always dense in $W^{k,p}(\R^d)$, for any $p\geq 1$ and $d,\,k=1,2,\dots$. Here we briefly discuss an analogous property for the subspaces $W^{k,p}_{A}(\R^d)$ and $W^{k,p}_{O}(\R^d)$. Let $\{\eta_{\va}\}_{\va>0}$ be a~ standard family of radial mollifiers and let $\varphi$ be a radial bump function of a class $C_c^{\infty}(\R^d)$. For a given $f\in W^{k,p}(\R^d)$ and $R>0$ we define
$$
f^{R,\va}(x)=\left(f*\eta_{\va}(x)\right)\varphi\left(\frac{x}{R}\right),
$$
where $h*g(x)=\int_{\R^d}h(y)g(x-y)\,dy$ is the standard convolution operation. We have $f^{R,\va}\in C_c^{\infty}(\R^d)$ and $f^{R,\va}\rightarrow f$ in $W^{k,p}(\R^d)$, as $R\rightarrow\infty$, $\va\rightarrow 0$. Moreover, from its definition it is clearly visible that $f^{R,\va}$ is antisymmetric, when $f$ is antisymmetric and $f^{R,\va}$ is odd, when $f$ is odd. Therefore, we obtain the following density property.
\begin{prop}\label{density}
Let $p\geq 1$, $k=1,2,\dots$.\\
\\
i)\quad If $d\geq 2$ then $C_c^{\infty}(\R^d)$ antisymmetric functions are dense in $W^{k,p}_{A}(\R^d)$;\\
\\
ii)\quad if $d\geq 1$, then $C_c^{\infty}(\R^d)$ odd functions are dense in $W^{k,p}_{O}(\R^d)$.
\end{prop}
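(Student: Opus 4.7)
The proposition is essentially a verification that the construction $f^{R,\va}(x) = (f*\eta_{\va})(x)\varphi(x/R)$ spelled out in the paragraph preceding the statement actually does all four things we need: lie in $C_c^{\infty}(\R^d)$, converge to $f$ in $W^{k,p}$, and preserve antisymmetry and oddness respectively. My plan is to check these one at a time, starting from the standard ingredients and highlighting only the symmetry preservation, which is the one new point.

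First I would record the classical facts. Smoothness of $f^{R,\va}$ follows because $f*\eta_\va$ is smooth (mollifier) and $\varphi(\cdot/R)$ is smooth; the support of $\varphi(\cdot/R)$ is compact, so $f^{R,\va}\in C_c^{\infty}(\R^d)$. The convergence $f^{R,\va}\to f$ in $W^{k,p}(\R^d)$ is the standard two-step argument: write
$$\|f^{R,\va}-f\|_{W^{k,p}} \le \|(f*\eta_\va-f)\varphi(\cdot/R)\|_{W^{k,p}} + \|f\varphi(\cdot/R)-f\|_{W^{k,p}},$$
then bound the first term using $\|f*\eta_\va-f\|_{W^{k,p}}\to 0$ together with uniform $C^k$ bounds on $\varphi(\cdot/R)$ (its $j$-th derivative carries a factor $R^{-j}$, so those terms stay bounded as $R\to\infty$), and bound the second by dominated convergence once one computes derivatives via the Leibniz rule.

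The key new step is verifying that the symmetry of $f$ survives both operations. For antisymmetry, fix an index swap $\sigma_{ij}$ and use that $\eta_\va$ is radial, so $\eta_\va\circ\sigma_{ij}=\eta_\va$. Then, substituting $y\mapsto\sigma_{ij}y$,
$$(f*\eta_\va)(\sigma_{ij}x)=\int_{\R^d} f(\sigma_{ij}x-y)\eta_\va(y)\,dy=\int_{\R^d} f(\sigma_{ij}(x-y))\eta_\va(y)\,dy=-(f*\eta_\va)(x),$$
where the final equality uses antisymmetry of $f$. Since $\varphi$ is radial, $\varphi(\sigma_{ij}x/R)=\varphi(x/R)$, so the product $f^{R,\va}$ remains antisymmetric. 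For the odd case, the same change of variables $y\mapsto -y$ gives $(f*\eta_\va)(-x)=-(f*\eta_\va)(x)$, and $\varphi(-x/R)=\varphi(x/R)$ handles the cutoff.

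I do not anticipate any real obstacle: the construction has been written down explicitly by the author, and everything reduces to bookkeeping. The only place where one has to be mildly careful is ensuring that the radiality of both $\eta_\va$ and $\varphi$ is actually used (otherwise the symmetry would in general be destroyed), and to observe that the sign change is inherited from $f$, not generated by the convolution or cutoff. Once this is in place, combining steps gives $f^{R,\va}\in C_c^{\infty}(\R^d)\cap W^{k,p}_A(\R^d)$ (resp.\ $W^{k,p}_O(\R^d)$) with $f^{R,\va}\to f$ in $W^{k,p}(\R^d)$, which is exactly the density claim in parts (i) and (ii).
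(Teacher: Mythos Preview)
Your proposal is correct and follows exactly the approach the paper takes: the paragraph preceding the proposition already introduces $f^{R,\va}=(f*\eta_\va)\varphi(\cdot/R)$ with radial $\eta_\va$ and $\varphi$, and the proposition is just the observation that this construction preserves antisymmetry and oddness while converging in $W^{k,p}$. You have merely filled in the routine details the author left implicit, in particular the change of variables $y\mapsto\sigma_{ij}y$ (resp.\ $y\mapsto -y$) that shows the mollification step preserves the relevant symmetry.
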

\subsection{Vandermonde determinant and its properties}
Recall that, for $d=2,3,\dots$, the Vandermonde determinant is defined as
\begin{align*}
\Vd(x)=\Vd(x_1,\dots,x_d)=\begin{vmatrix}
1 & x_{1} & x_{1}^{2} & \dots & x_{1}^{d-1} \\ 
1 & x_{2} & x_{2}^{2} & \dots & x_{2}^{d-1} \\
\vdots & \vdots & \vdots & \ddots & \vdots \\
1 & x_{d} & x_{d}^{2} & \dots & x_{d}^{d-1}
\end{vmatrix}
=
\prod_{i < j} (x_{j} - x_{i}).
\end{align*}
This function is antisymmetric and homogeneous of order $d(d-1)/2$, that is $\Vd(ax)=a^{d(d-1)/2}\Vd(x)$ for all $x\in\R^d$, $a\in\R$. Moreover, $\Vd$ is harmonic, i.e. $\Delta\Vd=0$.

The following statement is elementary and essentially well known, but crucial to our further computations. For the sake of completeness, we provide its short proof.
\begin{prop}\label{prop1}
For $d=2,3,\dots$ and all $x\in\R^d$ we have 
\begin{equation}\label{Vandermonde_identity}
\sum_{i=1}^{d}x_i\frac{\partial\Vd}{\partial x_i}(x)=\frac{d(d-1)}{2}\Vd(x).
\end{equation}
\end{prop}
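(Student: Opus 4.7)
The plan is to recognize this as a direct application of Euler's identity for homogeneous functions, taking advantage of the fact, already recorded in the paper just above the statement, that $\mathcal{V}_d$ is homogeneous of degree $d(d-1)/2$.

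First I would verify (or just quote) the homogeneity: from the product formula
$$
\mathcal{V}_d(ax_1,\dots,ax_d)=\prod_{i<j}(ax_j-ax_i)=a^{d(d-1)/2}\prod_{i<j}(x_j-x_i)=a^{d(d-1)/2}\mathcal{V}_d(x),
$$
where the exponent $d(d-1)/2$ arises as the number of pairs $\{i,j\}$ with $i<j$, i.e.\ $\binom{d}{2}$.

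Next, differentiate the identity $\mathcal{V}_d(ax)=a^{d(d-1)/2}\mathcal{V}_d(x)$ with respect to $a$. The chain rule on the left side gives $\sum_{i=1}^{d} x_i\,(\partial \mathcal{V}_d/\partial x_i)(ax)$, while the right side yields $\tfrac{d(d-1)}{2}a^{d(d-1)/2-1}\mathcal{V}_d(x)$. Setting $a=1$ produces exactly \eqref{Vandermonde_identity}.

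There is essentially no obstacle; the only care needed is the elementary justification that $\mathcal{V}_d$ is a polynomial (hence smooth) so that the chain rule applies without qualification, and that the homogeneity degree really is $\binom{d}{2}$. One could equally well give a direct combinatorial proof by writing $\partial \mathcal{V}_d/\partial x_i = \mathcal{V}_d(x)\sum_{j\neq i}(x_i-x_j)^{-1}$ (away from the vanishing set of $\mathcal{V}_d$) and then computing
$$
\sum_{i=1}^{d}x_i\sum_{j\neq i}\frac{1}{x_i-x_j}=\sum_{i<j}\frac{x_i-x_j}{x_i-x_j}=\binom{d}{2},
$$
which recovers the identity on the open dense set where $\mathcal{V}_d\neq 0$ and hence everywhere by continuity; but the homogeneity argument is the cleanest.
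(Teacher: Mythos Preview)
Your proof is correct and follows essentially the same route as the paper: differentiate the homogeneity relation $\mathcal{V}_d(ax)=a^{d(d-1)/2}\mathcal{V}_d(x)$ with respect to $a$ and set $a=1$. The additional verification of the degree and the alternative logarithmic-derivative argument are fine but not needed.
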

\begin{proof}

We differentiate both sides of the identity $\Vd(ax)=a^{d(d-1)/2}\Vd(x)$ with respect to $a$ and then put $a=1$.
\end{proof}
\begin{rem}
Notice that \eqref{Vandermonde_identity} and the Schwarz inequality imply that 
\begin{equation}\label{schwarz}
|x||\nabla\Vd(x)|\geq \frac{d(d-1)}{2}|\Vd(x)|,\,x\in\R^d.
\end{equation}
\end{rem}
The Vandermonde determinant appears naturally in considerations concering antisymmetric functions. As stated in \cite[Proposition 1]{MR4422783}, if $\varphi$ is an analytic, antisymmetric function on $\R^d$, then $\varphi=\psi\Vd$, where $\psi$ is analytic and symmetric.

\subsection{Proof of the antisymmetric Hardy inequality}
\begin{proof}[Proof of Theorem \ref{tw1}]
Let $u\in W^{1,p}_{A}(\R^d)$. Using Proposition \ref{density} we may and do assume that $u$ is smooth and compactly supported. Define the domain $\Omega$ by 
\begin{equation}\label{Omega}
\Omega=\{x\in\R^d:x_1<x_2\dots<x_d\}.
\end{equation}
The boundary of $\Omega$ is of course 
$$
\partial\Omega=\{x\in\R^d: x_i=x_j\text{ for some }i,j\in\{1,2,\dots,d\},i\neq j\},
$$
so it consists of $\tfrac{d(d-1)}{2}$ hyperplanes. Moreover, any antisymmetric function vanishes on $\partial\Omega$. Since the integrands are symmetric, we clearly have $$\int_{\R^d}|\nabla u(x)|^p\,dx=d!\int_{\Omega}|\nabla u(x)|^p\,dx$$ and $$\int_{\R^d}\frac{|u(x)|^p}{|x|^{p}}\,dx=d!\int_{\Omega}\frac{|u(x)|^p}{|x|^{p}}\,\,dx,$$ therefore it suffices to prove the corresponding Hardy inequality on the domain $\Omega$.  In order to complete the proof, we want to adapt the method from \cite{MR1769903} and \cite{MR2048514}. More precisely, if $T\colon\Omega\to\R^d,\,T=(T^1,\dots,T^d)$, is a vector field of a class $C^1$ such that all $T^k(x)|u(x)|^p$ vanish, when $x$ reaches the boundary of $\Omega$ (and infinity), then the following Hardy inequality holds,
\begin{equation}\label{generalHardyT}
\int_{\Omega}|\nabla u(x)|^p\,dx\geq\int_{\Omega}|u(x)|^p\left(\divv T(x)-(p-1)|T(x)|^\frac{p}{p-1}\right)\,dx.
\end{equation}
Let $\al\in\R$, $\be\geq 0$ be free parameters, to be chosen later and let $\varepsilon>0$. We define the vector fields $T_{\varepsilon}$ and $T$ on $\Omega$ by
$$
T_{\varepsilon}(x)=\frac{\al x}{|x|^p+\varepsilon}-\frac{\be\,\nabla\Vd(x)}{\left(\Vd(x)+\va\right)\left(|x|^{p-2}+\varepsilon\right)}
$$
and
$$
T(x)=\lim_{\varepsilon\rightarrow 0}T_{\varepsilon}(x)=\frac{\al x}{|x|^p}-\frac{\be\,\nabla\Vd(x)}{\Vd(x)|x|^{p-2}}.
$$
Observe first that $\Vd$ is positive on $\Omega$, hence, $T_{\va}$ and $T$ are well defined and, since $u$ is compactly supported in $\R^d$ and vanishes on $\partial\Omega$, then \eqref{generalHardyT} holds with $T_{\va}$ in place of $T$. Therefore, Fatou's lemma yields \eqref{generalHardyT} with $T$. 

Let $\lambda=\tfrac{d(d-1)}{2}$. A quick computation using Proposition \ref{prop1} and the harmonicity of $\Vd$ shows that
$$
\divv T(x)=\frac{\al(d-p)+\be (p-2)\la}{|x|^p}+\frac{\be\,|\nabla\Vd(x)|^2}{\Vd(x)^2 |x|^{p-2}}.
$$
Observe that if $p\geq 2$, then $\tfrac{p}{p-1}\leq 2$. Using \eqref{Vandermonde_identity}, we have
\begin{align*}
 |T(x)|^{\frac{p}{p-1}}&=\left(\frac{\al^2}{|x|^{2p-2}}-\frac{2\al\be\lambda}{|x|^{2p-2}}+\be^2\frac{|\nabla\Vd(x)|^{2}}{\Vd(x)^{2}|x|^{2p-4}}\right)^{\frac{p}{2(p-1)}}\\
 &=\frac{1}{|x|^p}\left[\al^2-2\al\be\la+\be^2\left(\frac{|x||\nabla\Vd(x)|}{\Vd(x)}\right)^2\right]^{\frac{p}{2(p-1)}}.
\end{align*}
Therefore, we may write that
\begin{align*}
\divv T(x)-(p-1)|T(x)|^\frac{p}{p-1}\geq\frac{f(t;\al,\be)}{|x|^p},
\end{align*}
where, by \eqref{schwarz},
$$
t=\left(\frac{|x||\nabla\Vd(x)|}{\Vd(x)}\right)^2\geq\lambda^2
$$
and
$$
f(t;\al,\be)=\al(d-p)+\be(p-2)\la+\be t-(p-1)\left(\al^2-2\al\be\la+\be^2 t\right)^{\frac{p}{2(p-1)}}.
$$
To derive the Hardy constant, we need to find the value of 
$$
\max_{\al,\be}\left(\min_{t\geq\lambda^2}f(t;\al,\be)\right).
$$
Let us define the auxiliary function
$$
g(t;\al,\be)=\be t-(p-1)\left(\al^2-2\al\be\la+\be^2 t\right)^{\frac{p}{2(p-1)}}.
$$
We easily compute the partial derivatives of $g$ with respect to $t$, that is
\begin{align*}
\frac{\partial g}{\partial t}=\be\left(1-\frac{\be p}{2}\left(\al^2-2\al\be\la+\be^2 t\right)^{\frac{2-p}{2(p-1)}}\right)   
\end{align*}
and, recalling that $p\geq 2$,
$$
\frac{\partial^2 g}{\partial t^2}=\frac{p(p-2)\be^4}{4(p-1)}\left(\al^2-2\al\be\la+\be^2 t\right)^{\frac{4-3p}{2(p-1)}}\geq 0.
$$
Hence, standard calculus shows that  the minimum of the function $t\mapsto g(t;\al,\be)$ is attained at 
$$
t_0=\frac{\left(\frac{p\be}{2}\right)^{\frac{2(p-1)}{p-2}}-\al^2+2\al\be\la}{\be^2}.
$$
We have $t_0\geq\la^2$ if and only if
\begin{equation}\label{albe}
|\al-\la\be|\leq \left(\frac{p\be}{2}\right)^{\frac{p-1}{p-2}}.   
\end{equation}
Therefore, assuming \eqref{albe}, we obtain that
$$
g(t;\al,\be)\geq g(t_0;\al,\be)=:G(\al,\be),
$$
with
$$
G(\al,\be)=\al(d-p)+\be(p-2)\la+2\al\la-\frac{\al^2}{\be}-\be^{\frac{p}{p-2}}\left(\frac{p}{2}\right)^{\frac{p}{p-2}}\left(\frac{p}{2}-1\right).
$$
The global maximum of $G$ is attained at the point $P=(\al_0,\be_0)$, where
\begin{align*}
\al_0&=\left(\frac{d-p+2\la}{2}\right)\left[(p-2)\la+\left(\frac{d-p+2\la}{2}\right)^2\right]^{\frac{p-2}{2}}\left(\frac{2}{p}\right)^{p-1},\\
\be_0&=\left[(p-2)\la+\left(\frac{d-p+2\la}{2}\right)^2\right]^{\frac{p-2}{2}}\left(\frac{2}{p}\right)^{p-1}.
\end{align*}
The condition \eqref{albe} is clearly fulfilled, because $|\al_0-\la\be_0|=|\tfrac{d-p}{2}|\be_0$. Moreover, the maximum equals 
$$
G(\al_0,\be_0)=\left(\frac{2}{p}\right)^p\left[(p-2)\la+\left(\frac{d-p+2\la}{2}\right)^2\right]^{\frac{p}{2}}.
$$
Recalling that $\la=\tfrac{d(d-1)}{2}$ ends the proof.
\end{proof}
\section{Hardy inequality for odd functions}\label{Section3}
We begin with a simple observation concerning odd functions.
\begin{prop}\label{propositionodd}
Let $u\colon\R^d\to\R$ be a $C^1$ odd function, not constantly equal to zero. Then, there exists an odd polynomial $h$ of order one and an even function $\varphi$, both not constantly equal to zero, such that \\
\\
i)\quad$u(x)=h(x)\varphi(x),\,\,x\neq 0$;\\
\\
ii)\quad$\displaystyle\sum_{k=1}^{d}x_k\frac{\partial h}{\partial x_{k}}(x)=h(x)$;\\
\\
iii)\quad $\displaystyle\lim_{x\rightarrow 0}\varphi(x)$ exists and is finite.
\end{prop}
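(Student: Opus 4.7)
The plan is to imitate the antisymmetric factorization $u=\psi\Vd$ used in the previous section, with $\Vd$ replaced by a linear form $h$ tailored to $u$. Since $u$ is $C^{1}$ and odd, one immediately has $u(0)=0$, and differentiating the relation $u(-x)=-u(x)$ shows that each partial derivative $\partial_{k}u$ is even. I would select a nonzero vector $v\in\R^{d}$ (depending on $u$) and set $h(x):=v\cdot x$; then $h$ is a nonzero homogeneous polynomial of degree one, hence odd, and clause (ii) is nothing but Euler's identity for degree-one homogeneous functions.

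The even companion $\varphi$ is then defined by $\varphi(x):=u(x)/h(x)$ wherever $h(x)\ne 0$. As the quotient of two odd functions it is automatically even, since $\varphi(-x)=(-u(x))/(-h(x))=\varphi(x)$, so it extends to the hyperplane $\{h=0\}\setminus\{0\}$ by evenness, and the identity (i) holds on $\{h\neq 0\}$ by construction. For (iii), the natural choice $v=\nabla u(0)$ (whenever this gradient is nonzero) gives the expansion $u(x)=h(x)+o(|x|)$, whence $\varphi(x)=1+o(|x|)/h(x)\to 1$ along any cone on which $|h(x)|\gtrsim|x|$; this identifies the candidate limit at the origin.

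The main obstacle is to pin down $v$ so that (a) $u$ genuinely vanishes on the hyperplane $\{h=0\}$ — allowing (i) to extend to every $x\neq 0$ through the even extension of $\varphi$ — and (b) the limit in (iii) is direction-independent, including along sequences approaching $\{h=0\}$. I would address both by exploiting the fundamental-theorem-of-calculus representation
\[
u(x)=\int_{0}^{1}\frac{d}{dt}u(tx)\,dt=\sum_{k=1}^{d}x_{k}\int_{0}^{1}\partial_{k}u(tx)\,dt,
\]
together with the evenness of each $\partial_{k}u$, to isolate a single linear factor and recognise the residual even factor as one that extends continuously through the origin. The degenerate case $\nabla u(0)=0$ requires separate attention using higher-order jets of $u$ to locate a suitable direction $v$; I expect this bookkeeping, rather than the algebraic manipulations in Steps~1–3, to be the technical heart of the argument.
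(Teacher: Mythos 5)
Your outline follows the same route as the paper, whose entire proof is a one-line appeal to the multidimensional Taylor formula at the origin (plus the remark that (ii) is Euler's identity for degree-one polynomials); your choice $h(x)=\nabla u(0)\cdot x$ and the representation $u(x)=\sum_{k}x_{k}\int_{0}^{1}\partial_{k}u(tx)\,dt$, with each $g_{k}(x)=\int_{0}^{1}\partial_{k}u(tx)\,dt$ even, is exactly the quantitative form of that idea. But the difficulty you defer as the ``main obstacle'' --- arranging that $u$ vanish identically on the hyperplane $\{h=0\}$ so that (i) holds at \emph{every} $x\neq 0$ --- is not bookkeeping: for $d\geq 2$ it is a genuine obstruction that no choice of $v$ can overcome for a general $C^{1}$ odd $u$. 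Take $u(x_{1},x_{2})=x_{1}+x_{2}^{3}$ (or a compactly supported smooth odd modification of it): its zero set contains no line through the origin, so for every linear form $h$ there are points $x\neq 0$ with $h(x)=0$ but $u(x)\neq 0$, where $u=h\varphi$ with a real-valued $\varphi$ is impossible. The FTC identity only yields the sum $u=\sum_{k}x_{k}g_{k}$ of $d$ products of an odd linear form with an even function, and such a sum cannot in general be collapsed into a single such product. A secondary slip: the hyperplane $\{v\cdot x=0\}$ is invariant under $x\mapsto -x$, so ``extending $\varphi$ to the hyperplane by evenness'' does not actually define $\varphi$ there.

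What your argument does prove is the one-dimensional case ($u(x)=xg(x)$ with $g$ even and $g(0)=u'(0)$), items (ii) and (iii) together with (i) restricted to a cone $|h(x)|\gtrsim|x|$, and the full statement for functions that genuinely factor through a linear form. Since the paper's own proof is the same Taylor-formula appeal and silently skips the identical step, your explicit identification of the obstruction is, if anything, the more honest account --- but neither argument establishes (i) for all $x\neq 0$ as stated, and any repair must either restrict the class of odd functions considered or weaken (i) to a local/off-hyperplane statement. You should not expect the ``higher-order jets'' in the degenerate case $\nabla u(0)=0$ to be the hard part; the hard part is already present when $\nabla u(0)\neq 0$.
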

\begin{proof}
 This follows immediately from multidimensional Taylor's formula. Note that the second property is true for any polynomial of the first order.
\end{proof}

 Observe that the second property from the Proposition \ref{propositionodd} is a counterpart of \eqref{Vandermonde_identity} with $\lambda=1$. This is the order of homogenity of the corresponding function $h$, therefore, the proof of the Hardy inequality for odd functions will follow the same scheme as the proof of \eqref{Hardyantisymmetric}, with $h$ playing the role of the Vandermonde determinant. We now make this more precise.   

\begin{proof}[Proof of Theorem \ref{tw2}]
 By making use of the Proposition \ref{density}, we let $u$ to be a smooth, compactly supported odd function. Let $h$ be a polynomial from Proposition \ref{propositionodd}. We define the domains $\Omega_+=\{x\in\R^d: h(x)>0\}$ and $\Omega_{-}=\{x\in\R^d: h(x)<0\}=\Omega_+^c$. Clearly $\partial\Omega_+=\partial\Omega_-=\{x\in\R^d: h(x)=0\}$ and $u$ vanishes on $\partial\Omega_{+}$. Moreover, 
 $$\int_{\R^d}|\nabla u(x)|^p\,dx=2\int_{\Omega_+}|\nabla u(x)|^p\,dx$$ 
 and
 $$\int_{\R^d}\frac{|u(x)|^p}{|x|^p}\,dx=2\int_{\Omega_+}\frac{|u(x)|^p}{|x|^p}\,dx,$$ therefore it suffices to prove the corresponding Hardy inequality on the set $\Omega_+$.

 For $\al\in\R,\,\be\geq 0$ we now define the vector fields on $\Omega_+$ by
 $$
T_{\varepsilon}(x)=\frac{\al x}{|x|^p+\varepsilon}-\frac{\be\,\nabla h(x)}{\left(h(x)+\va\right)\left(|x|^{p-2}+\varepsilon\right)}
$$
and
$$
T(x)=\lim_{\varepsilon\rightarrow 0}T_{\varepsilon}(x)=\frac{\al x}{|x|^p}-\frac{\be\,\nabla h(x)}{h(x)|x|^{p-2}}.
$$
The rest of the proof is the same as in the proof of Theorem \ref{tw1}; one only needs to replace $\Vd$ by $h$ and $\lambda$ by one in all the computations therein.
\end{proof}
\begin{proof}[Proof of Theorem \ref{Hardyweighted}]
To deal with weights, we first need to formulate a weighted analogue of the general Hardy-type inequality \eqref{generalHardyT}. Let $D\subset\R^d$ be an opet set, let $T\colon D\to\R^d$ be a $C^1$ vector field and suppose that the $|u(x)|^p T^k(x)|x|^{-\gamma}$ vanish for $x\rightarrow\partial D$ and $x\rightarrow\infty$. Then, integration by parts and Hölder and Young inequalities yield
\begin{align*}
\int_{D}\frac{|u(x)|^p}{|x|^{\gamma}}\divv T(x)\,dx&=-p\int_{D}\frac{|u(x)|^{p-2}u(x)}{|x|^{\gamma}}\langle \nabla u(x),T(x)\rangle\,dx+\gamma\int_{D}\frac{|u(x)|^p}{|x|^{\gamma}}\langle x, T(x)\rangle\,dx\\
&\leq p\int_{D}\frac{|u(x)|^{p-1}|\nabla u(x)||T(x)|}{|x|^{\gamma}}\,dx+\gamma\int_{D}\frac{|u(x)|^p}{|x|^{\gamma+2}}\langle x, T(x)\rangle\,dx\\
&\leq p\left(\int_{D}\frac{|u(x)|^p|T(x)|^{\frac{p}{p-1}}}{|x|^{\gamma}}\,dx\right)^{\frac{p-1}{p}}\left(\int_{D}\frac{|\nabla u(x)|^p}{|x|^{\gamma}}\,dx\right)^{\frac{1}{p}}\\
&+\gamma\int_{D}\frac{|u(x)|^p}{|x|^{\gamma+2}}\langle x, T(x)\rangle\,dx\\
&\leq (p-1)\int_{D}\frac{|u(x)|^p|T(x)|^{\frac{p}{p-1}}}{|x|^{\gamma}}\,dx+\int_{D}\frac{|\nabla u(x)|^p}{|x|^{\gamma}}\,dx\\
&+\gamma\int_{D}\frac{|u(x)|^p}{|x|^{\gamma+2}}\langle x, T(x)\rangle\,dx,
\end{align*}
which leads us to
\begin{equation}\label{generalHardyTweighted}
\int_{D}\frac{|\nabla u(x)|^p}{|x|^{\gamma}}\,dx\geq\int_{D}\frac{|u(x)|^p}{|x|^{\gamma}}\left[\divv T(x)-(p-1)|T(x)|^{\frac{p}{p-1}}-\gamma\frac{\langle x, T(x)\rangle}{|x|^2}\right]\,dx.    
\end{equation}
If necessary, one can also replace $|x|^{\gamma}$ by $|x|^{\gamma}+\varepsilon$ in all the computations above and then let $\varepsilon\rightarrow 0$.

To prove the weighted Hardy inequality for antisymmetric functions, we modify the proof of Theorem \ref{tw1}; we denote $\la=\tfrac{d(d-1)}{2}$, $D=\Omega$ and $T(x)=\frac{\al x}{|x|^p}-\frac{\be\,\nabla\Vd(x)}{\Vd(x)|x|^{p-2}}$. We clearly have $\langle x,T(x)\rangle=\frac{\al-\be\la}{|x|^{p-2}}$, hence, recalling that $t=\left(\tfrac{|x||\nabla\Vd(x)|}{\Vd(x)}\right)^2\geq\la^2$, the function in the square brackets under the integral in the right-hand side of \eqref{generalHardyTweighted} can be transform into the form
$$
f(t;\al,\be,\gamma)=\al(d-p-\gamma)+\be(p-2+\gamma)\la+\be t-(p-1)\left(\al^2-2\al\be\la+\be^2 t\right)^{\frac{p}{2(p-1)}}.
$$
The optimization procedure like in the proof of Theorem \ref{tw1} leads to the constant $C_H(d,p,\gamma)$. The proof for odd functions is similar and will be omitted.
\end{proof}
\section{Rellich inequalities}\label{Section4}
\begin{proof}[Proof of Theorem \ref{WeightedRellichanti}]
In the proof of the Rellich inequality we will follow \cite[Proof of Theorem 7.2.1]{MR3966452}. We assume that $u\in C^{\infty}_c(\R^d)$ is antisymmetric. Let $\Omega$ be the domain deined in \eqref{Omega}. By antisymmetry, $u$ vanishes on the boundary of $\Omega$, but not necessarily near it. However, we can always find a sequence $u_n$ of smooth, compactly supported functions in $\Omega$ such that all derivatives of $u_n$ converge to derivatives of $u$ uniformly in $\Omega$, as $n\rightarrow\infty$. Next,  for $\va>0$ we define
\begin{align*}
u^{(n)}_{\va}(x)&=\left(u_n^2(x)+\va^2\right)^{\frac{p}{2}}-\va^p,\\
w^{(n)}_{\va}(x)&=\left(u_n^2(x)+\va^2\right)^{\frac{p}{4}}-\va^{\frac{p}{2}}.
\end{align*}
The functions $u^{(n)}_{\va}$ and $w^{(n)}_{\va}$ are smooth, compactly supported in $\Omega$ and nonnegative. We calculate
\begin{align*}
\Delta u^{(n)}_{\va}(x)&=p\left(u_n^2(x)+\va^2\right)^{\frac{p}{2}-1}|\nabla u_n(x)|^2+p(p-2)\left(u_n^2(x)+\va^2\right)^{\frac{p}{2}-2}u_n^2(x)|\nabla u_n(x)|^2\\
&+p\left(u_n^2(x)+\va^2\right)^{\frac{p}{2}-1}u_n(x)\Delta u_n(x)\\
&\geq p(p-1)\left(u_n^2(x)+\va^2\right)^{\frac{p}{2}-2}u_n^2(x)|\nabla u_n(x)|^2+p\left(u_n^2(x)+\va^2\right)^{\frac{p}{2}-1}u_n(x)\Delta u_n(x)\\
&=\frac{4(p-1)}{p}|\nabla w^{(n)}_{\va}(x)|^2+p\left(u_n^2(x)+\va^2\right)^{\frac{p}{2}-1}u_n(x)\Delta u_n(x),
\end{align*}
therefore, the above leads to the inequality
$$
-p\int_{\Omega}\frac{\left(u_{n}^2(x)+\va^2\right)^{\frac{p}{2}-1}u_n(x)\Delta u_n(x)}{|x|^{\ga+2(p-1)}}\,dx\geq\frac{4(p-1)}{p}\int_{\Omega}\frac{|\nabla w^{(n)}_{\va}(x)|^2}{|x|^{\ga+2(p-1)}}\,dx-\int_{\Omega}\frac{\Delta u^{(n)}_{\va}(x)}{|x|^{\ga+2(p-1)}}\,dx,
$$
Recall that both $u^{(n)}_{\va}$ and $w^{(n)}_{\va}$ vanish on the boundary of $\Omega$. Hence, remembering that $\la=\tfrac{d(d-1)}{2}$, by the weighted Hardy inequality from Theorem \ref{Hardyweighted} we get
$$
\int_{\Omega}\frac{|\nabla w^{(n)}_{\va}(x)|^2}{|x|^{\ga+2(p-1)}}\,dx\geq\left[(\ga+2p-2)\la+\left(\frac{d^2-\ga-2p}{2}\right)^2\right]\int_{\Omega}\frac{|w^{(n)}_{\va}(x)|^2}{|x|^{\ga+2p}}\,dx
$$
and, integrating by parts twice,
$$
\int_{\Omega}\frac{\Delta u^{(n)}_{\va}(x)}{|x|^{\ga+2(p-1)}}\,dx=\left(\ga+2p-2\right)\left(\ga+2p-d\right)\int_{\Omega}\frac{u^{(n)}_{\va}(x)}{|x|^{\ga+2p}}\,dx.
$$
Letting $n\rightarrow\infty$ and $\va\rightarrow 0$, by Hölder inequality, we obtain
\begin{align*}
-p\int_{\Omega}\frac{|u(x)|^{p-2}u(x)\Delta u(x)}{|x|^{\ga+2(p-1)}}\,dx\leq p\left(\int_{\Omega}\frac{|u(x)|^p}{|x|^{\ga+2p}}\,dx\right)^{\frac{p-1}{p}}\left(\int_{\Omega}\frac{|\Delta u(x)|^p}{|x|^{\ga}}\,dx\right)^{\frac{1}{p}}.
\end{align*}
Summing all the computations above  gives the desired result for the Rellich inequality like \eqref{weightedRellichantisymmetric}, but on $\Omega$. Now, a symmetry argument similar to this from the proof of the Hardy inequality yields the inequality \eqref{weightedRellichantisymmetric} over the whole $\R^d$.

We are able to proof the optimality of the constant for $p=2$. For simplicity, consider the unweighted case. The weighted case can be treated similarly, but the computations are more involved. For $\va>0$ we define the functions
$$
u_{\va}(x)=\begin{cases}
\displaystyle\frac{\Vd(x)}{|x|^{\al}}\text{\qquad if\,\,} |x|\leq 1,\\
 \displaystyle\frac{\Vd(x)}{|x|^{\be}}\text{\qquad if\,\,}|x|>1,
 \end{cases}
$$
where $\al=d/2+\la-2-\va=(d^2-4)/2-\va$ and $\be=(d^2-4)/2+\va$. Of course, the functions $u_{\va}$ do not belong to the appropriate Sobolev space, since its gradient (and in consequence the Laplacian) does not exist on the unit sphere. Nevertheless, since $\Delta(fg)=g\Delta f +2\langle\nabla f,\nabla g\rangle +f\Delta g$, using \eqref{Vandermonde_identity} and $\Delta\Vd=0$ we calculate that for $\rho\in\R$ and $|x|\neq 1$,
$$
\Delta\frac{\Vd(x)}{|x|^{\rho}}=\rho(\rho+2-d-2\la)\frac{\Vd(x)}{|x|^{\rho+2}}.
$$
Therefore, 
\begin{align*}
\int_{\R^d}|\Delta u(x)|^2\,dx&=\left(\al(\al+2-2\la-d)\right)^2\int_{|x|<1}\frac{|\Vd(x)|^2}{|x|^{2(\al+2)}}\,dx\\
&+\left(\be(\be+2-2\la-d)\right)^2\int_{|x|>1}\frac{|\Vd(x)|^2}{|x|^{2(\be+2)}}\,dx, 
\end{align*}
hence, substituting the values of $\al$ and $\be$ we see that
$$
\left[\left(\frac{d^2}{2}-\va\right)\left(\frac{d^2-4}{2}-\va\right)\right]^2\leq\displaystyle\frac{\displaystyle\int_{\R^d}|\Delta u_{\va}(x)|^2\,dx}{\displaystyle\int_{\R^d}\frac{|u_{\va}(x)|^2}{|x|^4}\,dx}\leq\left[\left(\frac{d^2}{2}+\va\right)\left(\frac{d^2-4}{2}+\va\right)\right]^2
$$
and we conclude that the constant $d^4(d^2-4)^2/16$ is optimal by letting $\va\rightarrow 0$ and choosing ,,smooth version'' of functions $u_{\va}$, that is functions which are smooth and converge to $u_{\va}$ in appropriate norms. We omit the technical details.
\end{proof}
\begin{proof}[Proof of Theorem \ref{WeightedRellichodd}]
    The proof is completely analogous to the proof of Theorem \ref{WeightedRellichanti}. We leave the details to the Reader. To verify the optimality of the constant $(d^2-4)^2/16$ in the unweighted case $p=2$, we use ,,smooth versio'' of the trial functions of the form
    $$
u_{\va}(x)=\begin{cases}
\displaystyle\frac{H_{d}(x)}{|x|^{\al}}\text{\qquad if\,\,} |x|\leq 1,\\
 \displaystyle\frac{H_{d}(x)}{|x|^{\be}}\text{\qquad if\,\,}|x|>1,
 \end{cases}
$$
where $H_{d}(x)=\displaystyle\sum_{k=1}^{d}x_k$, $\al=d/2-1-\va$ and $\be=d/2-1+\va$.  \\
\\
\\
\noindent
\textbf{Acknowledgement.} The author would like to thank Konstantin Merz for helpful discussion.
\end{proof}

\end{document}